\def\ZZ{{\mathbb Z}}
\def\NN{{\mathbb N}}
\def\CC{{\mathbb C}}
\def\RR{{\mathbb R}}
\def\QQ{{\mathbb Q}}
\def\OO{{\mathcal O}}
\def\JR{{\text{\textnormal{JR}}}}
\newtheorem{theorem}{Theorem}[section]
\newtheorem*{theorem*}{Main Theorem}
\newtheorem{problem}[theorem]{Problem}
\newtheorem{lemma}[theorem]{Lemma}
\newtheorem{prop}[theorem]{Proposition}
\author{Caleb Springer}
\address{Department of Mathematics, The Pennsylvania State University, University Park, PA 16802, USA}
\email{cks5320@psu.edu}
\title{Undecidability, unit groups, and some totally imaginary infinite extensions of $\QQ$}
\date{\today}
\begin{document}
\begin{abstract}
We produce new examples of totally imaginary infinite extensions of $\QQ$ which have undecidable first-order theory by generalizing the methods used by Mart\'{\i}nez-Ranero, Utreras and Videla for $\QQ^{(2)}$. In particular, we use parametrized families of polynomials whose roots are totally real units to apply methods originally developed to prove the undecidability of totally real fields.  This proves the undecidability of $\QQ^{(d)}_{ab}$ for all $d \geq 2$.
\end{abstract}
\maketitle

%%%
  %
  %%%
\section{Introduction}
  There is a long history to the decision problem for theories, stemming from the work of Church, Hilbert and Turing, among many others. 
Notably, Church showed that there is no consistent decidable extension of Peano arithmetic, and proved, along with Turing, the negative answer to Hilbert's \emph{Entscheidungsproblem}; see \cite[\S4.3]{wolf} for a historical overview.

Some of the most famous questions and results in decidability have arisen from number theory.  
For example, Hilbert's Tenth Problem asks whether the positive-existential theory of $\ZZ$ is decidable, or as it was originally formulated, whether there is an algorithm which decides, given a multivariable polynomial equation with integer coefficients, whether or not the equation has an integer solution.
Famously, Matiyasevich proved that this problem is undecidable \cite{mat} by building on the work of Davis, Putnam and J. Robinson \cite{dpr}. 
The same problem with $\ZZ$ replaced by $\QQ$ is known as Hilbert's Tenth Problem over $\QQ$, and remains one of the biggest open problems in the intersection of number theory and decidability.

While Hilbert's Tenth Problem over $\QQ$ currently remains unknown, the decidability of the entire first-order theory of $\QQ$, or more generally an algebraic extension of $\QQ$, is another interesting question in its own right. Julia Robinson proved that $\QQ$ has undecidable first-order theory \cite{jr-Q}, and extended her result to every number field \cite{jr-F}.  Rumely  generalized this result further and proved that every global field is undecidable \cite{rumely}.
    However, the following question remains.
  
\begin{problem}
	Which infinite algebraic extensions of $\QQ$ have undecidable first-order theory?
\end{problem} 

There are many results which partially answer this question and demonstrate that some infinite algebraic extensions of $\QQ$ have undecidable first-order theory, while others do not.  For example, Shlapentokh proved that every abelian extension of $\QQ$ which is ramified at only finitely many primes is undecidable \cite[Theorem 5.5]{shlap}, generalizing a result of Videla \cite{vid-cycl}.  On the other hand, recall that an algebraic number is said to be totally real if its minimal polynomial has only real roots. Fried, Haran and V\"olklein proved that the field $\QQ^{\text{tr}}$ of all totally real numbers is decidable \cite{fhv}. 

One way to prove that a field $K$ is undecidable is to show that the ring of integers $\OO_K$ is undecidable and definable in $K$.  For example, this method proves that the field $K = \QQ( \{\sqrt p : p \text{ prime}\})$ is undecidable.  In this case, the definability of $\OO_K$ in $K$ was shown by Videla \cite[\S5.4]{vid}, while the undecidability of $\OO_K$ was proven by Julia Robinson \cite{jr} and follows from a general ``blueprint" that she described.  The blueprint, in a more general form due to C.W. Henson \cite[\S3.3]{vdd}, states that a ring of integers is undecidable if there is a definable family of subsets which contains finite sets of arbitrarily large cardinality.   As an application of this blueprint, Julia Robinson showed how such families of sets could be constructed for rings of totally real numbers by using totally positive elements.
Vidaux and Videla expanded on her ideas to prove the undecidability of a large class of rings of integers $\OO_K$ in totally real fields $K$  \cite{vv-nested, vv-northcott}.  When combined with definability results, such as those of Fukuzaki \cite{fuku}, Shlapentokh \cite{shlap}, and Videla \cite{vid}, this proves the undecidability of many totally real fields.

Recently, Mart\'{\i}nez-Ranero, Utreras and Videla \cite{mruv} leveraged these methods, which were  developed for totally real fields, to instead prove the undecidability of the totally imaginary field $\QQ^{(2)}$, the compositum of all degree 2 extensions of $\QQ$.  
The key to their strategy was noticing that $\QQ^{(2)}$ is a totally imaginary quadratic extension of the totally real field $K = \QQ(\{\sqrt p : p \text{ prime}\})$, and therefore $\OO_K^\times$ is a finite-index subgroup of $\OO_{\QQ^{(2)}}^\times$. 
Using this fact, they produce a special ``large" set $W$, which is definable in $\OO_{\QQ^{(2)}}$ and contains only totally real elements. The undecidability of $\QQ^{(2)}$ then follows from the aforementioned methods for totally real rings of integers and the definability of $\OO_{\QQ^{(2)}}$ in ${\QQ^{(2)}}$.
The goal of this paper is to generalize their strategy and produce more examples of totally imaginary infinite extensions of $\QQ$ with undecidable first-order theory.

 Given a number field $F$, let $F^{(d)}$ be the compositum of all extensions of $F$ of degree at most $d$, and let $F^{(d)}_{ab}$ be the maximal abelian subextension of $F\subseteq F^{(d)}$.  The following theorem will be proved as Theorem \ref{full-thm}, and the undecidability of $\QQ^{(2)}$ follows as a special case.  In Section \ref{examples}, we give additional explicit examples of totally real fields $K$ and families of polynomials $\{f_a(x) \mid a\in \ZZ_{\geq N_0}\}$ which satisfy the conditions of the theorem.

  \begin{theorem*} 
 Let $K$ be an infinite totally real extension of $\QQ$ which is contained in $F^{(d)}_{ab}$ for some $d\geq 2$ and some number field $F$.  Assume $K$ contains all roots of a parametrized family of polynomials 
  $$
	\{f_a(x) = x^n + p_{n-1}(a)x^{n-1} + \dots p_1(a)x +p_0(a) \mid a\in \ZZ_{> N_0}\}
$$
where  each $p_i(t)\in \ZZ[t]$ is a polynomial, $p_0(t) = \pm1$ is constant and $p_j(t)$ is nonconstant for some $1\leq j\leq n-1$.  If $L$ is any  totally imaginary quadratic extension of $K$, then the first-order theory of  $L$ is undecidable.
   \end{theorem*}

In this theorem, the ring of integers $\OO_K$ is undecidable by a result of Vidaux and Videla \cite{vv-northcott}.  This fact is used to deduce the undecidability of $\OO_L$, and therefore the undecidability of $L$ because $\OO_L$ is definable in $L$ by a result of Shlapentokh \cite{shlap}.  The strategy, as in the case of $\QQ^{(2)}$, is to exploit the unit group $\OO_L^\times$ to define a sufficiently large subset $W$ of $\OO_L$ which contains only totally real elements.  This is done by explicitly using a polynomial whose values are power-sums of the units defined by the family $\{f_a(x)\}$.
Producing this special polynomial is the main ingredient required to extend the method for $\QQ^{(2)}$ to this more general setting.
 It would be interesting to investigate whether the the undecidability of a totally real field $K$ implies the undecidability of its totally imaginary quadratic extensions in a more general context, without the need for the explicit parametrized family of polynomials or the field $F^{(d)}_{ab}$ containing $K$.

\subsection*{Acknowledgements}
The author would like to thank Hector Pasten for his helpful comments, and for suggesting a shorter proof of Lemma \ref{poly-lem}.  The author also thanks Kirsten Eisentr\"ager, Arno Fehm, Alexandra Shlapentokh and Xavier Vidaux for all of their comments and suggestions.  The author was partially supported by National Science Foundation award CNS-1617802.

%%%
%
%%%
\section{Sufficient Conditions For Undecidability}
Throughout this paper, $K$ will denote a totally real infinite extension of $\QQ$, and $L$ a totally imaginary quadratic extension of $K$.
The goal of this section is to find a suitable sufficient condition for when $L$ has undecidable first-order theory.  We will begin by reviewing some definability results and recall the methods used to prove undecidability in the totally real case.  

Clearly, if $\OO_L$ is definable in $L$ and the first-order theory of $\OO_L$ is undecidable, then the first-order theory of $L$ is also undecidable. There are many results on the definability of rings of integers in infinite algebraic extensions of $\QQ$; see, for example, the work of Fukuzaki \cite{fuku}, Shlapentokh \cite{shlap}, and Videla \cite{vid}.  The following result of Shlapentokh, presented in \cite[Example 4.3]{shlap}, will suffice for our purposes in this paper.

\begin{theorem}
\label{definability}
If $L$ is the compositum of finite extensions of $\QQ$ of degree less than some positive integer $d$, then $\OO_L$ is first-order definable in $L$.
 \end{theorem}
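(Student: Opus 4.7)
The plan is to exhibit an explicit first-order formula $\phi(x)$ in the language of rings such that $L \models \phi(\alpha)$ if and only if $\alpha \in \OO_L$, following the general norm-form strategy used for definability results in infinite algebraic extensions of $\QQ$.

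My first step would be to propose a formula of the shape $\phi(x) := \exists y_1, \ldots, y_k \in L \colon P(x, y_1, \ldots, y_k) = 0$ for a fixed polynomial $P \in \ZZ[x, y_1, \ldots, y_k]$, designed so that the existence of witnesses in $L$ is equivalent to local integrality at every non-archimedean prime $\pp$ of $L$. A natural candidate encodes a condition such as ``$1 + \alpha^2$ is represented by the norm form of a suitable quaternion algebra over $L$,'' which in various analogous settings (number fields, totally real infinite extensions) cuts out exactly the integers after mild modifications involving sums of squares. The sought formula must be uniform, i.e. independent of which finite subfield of $L$ happens to contain $\alpha$.

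The key structural leverage is the degree bound $d$. Although $L$ may have infinitely many primes and its local completions $L_\pp$ need not be finite extensions of $\QQ_p$ (as one sees already for $L = \QQ(\{\sqrt{p} : p \text{ prime}\})$), every individual element of $L$ lies in some finite subfield $M = M_1 \cdots M_k$ where each $M_i / \QQ$ has degree less than $d$. This uniformly restricts, in terms of $d$ alone, the ramification indices and residue-field degrees that can appear above any rational prime inside $M$, providing the local-structural control needed both to construct witnesses from integrality and to derive obstructions from non-integrality.

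The main obstacle is the completeness direction: showing that if $v_\pp(\alpha) < 0$ for some prime $\pp$, then no witnesses $y_1, \ldots, y_k \in L$ can satisfy $P(\alpha, y_1, \ldots, y_k) = 0$. This requires a local--global (Hasse-principle style) argument on the chosen norm form, proving that solvability in $L$ already fails in the completion $L_\pp$ whenever $\alpha$ is non-integral at $\pp$. The degree bound is exactly what allows a single fixed formula $\phi$ to work simultaneously across all primes, and carrying out this uniform local analysis is the heart of the argument; the technical details, which I would not attempt to reproduce from scratch, are those worked out in Example 4.3 of \cite{shlap}.
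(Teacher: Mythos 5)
The paper offers no proof of this statement at all: it is quoted as a known result of Shlapentokh, namely Example 4.3 of \cite{shlap}, and your plan likewise defers the technical core to that exact reference. Your high-level description of the mechanism --- a single uniform existential norm-form formula whose completeness rests on the global degree bound $d$ forcing uniform bounds on local degrees (ramification indices and residue degrees) at all primes --- is a fair account of what happens in Shlapentokh's paper (her boundedness condition on local degrees, verified for composita of extensions of bounded degree in that example), so your proposal is consistent with the paper's treatment and there is nothing substantive to check it against here.
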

 
Now our problem reduces to finding undecidable rings of integers.  The following condition, first presented by Julia Robinson \cite[Theorem 2]{jr} and later generalized by C.W. Henson \cite[\S3.3]{vdd}, gives a sufficient condition for when a ring of integers has undecidable first-order theory.

\begin{lemma} 
\label{henson-lem}
Let $\OO$ be a ring of algebraic integers. If there is a family $\mathcal F$ of subsets of $\OO$, parametrized by an $\mathcal L_{ring}$-formula, which contains finite sets of arbitrarily large cardinality, then $\OO$ has undecidable first-order theory.
 \end{lemma}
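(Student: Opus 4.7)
The plan is to interpret the standard model of arithmetic $(\NN, 0, 1, +, \cdot)$ inside $(\OO, +, \cdot)$ using the family $\mathcal F$, and then invoke the classical undecidability of first-order arithmetic together with the Church--Tarski--Mostowski--Robinson theorem.

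Let $\phi(x, \bar y)$ denote the $\mathcal L_{\text{ring}}$-formula parametrizing $\mathcal F$, so that $S_{\bar a} := \{x \in \OO : \OO \models \phi(x, \bar a)\}$. By hypothesis, the cardinalities $|S_{\bar a}|$ (over parameters $\bar a$ making $S_{\bar a}$ finite) are cofinal in $\NN$. By intersecting with uniformly definable cut-offs (parametrized subsets of $S_{\bar a}$) one may furthermore arrange that every positive integer $n$ is achieved exactly as $|S_{\bar a}|$ for some $\bar a$.

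The crucial step is to produce a single $\mathcal L_{\text{ring}}$-formula $\epsilon(\bar y_1, \bar y_2)$, possibly with additional parameters, defining the relation ``$S_{\bar y_1}$ and $S_{\bar y_2}$ are finite of equal cardinality''. The natural idea is to encode a bijection between the two finite sets as the graph of a polynomial of degree less than the common cardinality, available via Lagrange interpolation; such a graph, a finite subset of $\OO \times \OO$, is in turn coded as a finite subset of $\OO$ and captured in a suitably enlarged definable family in the spirit of $\mathcal F$. Then equinumerosity is expressed as the first-order existence of such a coded bijective graph, turning what would naively require quantification over polynomials of unbounded degree into a single existential $\mathcal L_{\text{ring}}$-formula.

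Once $\epsilon$ is in hand, the interpreted $\NN$ is the quotient of finite-cardinality parameters by $\epsilon$. Addition and multiplication are interpreted set-theoretically: $|S_{\bar a}| + |S_{\bar b}|$ is realized as $S_{\bar a} \sqcup (S_{\bar b} + r)$ for a suitable $r \in \OO$ making the union disjoint, and $|S_{\bar a}| \cdot |S_{\bar b}|$ is realized as a Cartesian product coded through the same pairing; both operations are manifestly first-order expressible once $\epsilon$ is available. The main obstacle throughout is the definition of $\epsilon$: the essential trick is packaging an arbitrary-degree bijection into a single existentially quantified object via the definable family itself, and this is where all the content of the argument lies.
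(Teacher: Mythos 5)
The paper does not actually prove this lemma; it quotes it from Julia Robinson \cite{jr} and Henson \cite[\S3.3]{vdd}, so your argument has to stand on its own. It does not: the step you yourself flag as carrying ``all the content'' --- the definition of the equinumerosity formula $\epsilon$ --- is asserted rather than proved, and the mechanism you sketch for it fails. A bijection between two $n$-element subsets of $\OO$, or equivalently the Lagrange interpolating polynomial through $n$ points, consists of $n$ independent pieces of data with $n$ unbounded, while a single $\mathcal L_{ring}$-formula can only quantify existentially over a fixed finite number of elements of $\OO$. To compress this into one existential quantifier you would need a uniform definable coding of arbitrary finite subsets of $\OO\times\OO$ of unbounded size by tuples of bounded length, and nothing in the hypothesis supplies one: $\mathcal F$ is an abstract definable family, and there is no reason the graph of an arbitrary bijection between $S_{\bar a}$ and $S_{\bar b}$ should be coded by a member of $\mathcal F$ or of any family definable from it. The same unresolved coding problem reappears in your interpretation of multiplication via Cartesian products. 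A smaller gap of the same kind occurs earlier: ``intersecting with uniformly definable cut-offs'' to realize every cardinality is not a single formula either, since deleting $k$ elements from $S_{\bar a}$ takes $k$ parameters with $k$ unbounded.

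This difficulty is exactly what the classical proofs are built to circumvent. Robinson never defines equinumerosity: she uses the family to define the subset $\NN\subseteq\OO$ directly by an induction-style trick (roughly, $x$ is a natural number iff $x$ lies in every set of a derived definable family that contains $0$ and is closed under successor below $x$), and then undecidability of $\OO$ follows by relativizing quantifiers to this definable copy of $(\NN,+,\cdot)$. Henson's generalization instead interprets a hereditarily undecidable theory of finite structures. To repair your outline you would need to replace the interpolation/coding step with one of these devices; note also that because your interpretation carries parameters, you must quantify them out against a finitely axiomatized essentially undecidable theory such as Robinson's $Q$, rather than appeal directly to true arithmetic.
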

 
 %%%
%
%%%
 \subsection{Totally real fields}\label{totally-real-section}
  
For totally real rings of integers, Lemma \ref{henson-lem} enables a concrete method, first used by Julia Robinson \cite{jr} and developed further by Vidaux and Videla \cite{vv-nested, vv-northcott}, to prove undecidability.
Given a set $X$ of totally real algebraic numbers, define
$$
	X_t = \{\alpha\in X : 0\ll \alpha \ll t\}
$$
where $0\ll \alpha \ll t$ means that every conjugate of $\alpha$ lies in the interval $(0,t)$.
The \emph{JR-number} of $X$ is 
$$
	\JR(X) = \inf\{t\in \RR : \#X_t = \infty \}.
$$
When $X = \OO_K$ for a totally real field $K$, the sets $X_t$ are definable in $X$ because every totally positive algebraic number is the sum of four squares by a theorem of Siegel \cite{siegel}, and therefore the following theorem follows from Lemma \ref{henson-lem}.

\begin{theorem}[\cite{jr}]
\label{jr-number}
If $K$ is a totally real field and the JR-number $\JR(\OO_K)$ is either a minimum or infinite, then the first-order theory of $\OO_K$ is undecidable.
 \end{theorem}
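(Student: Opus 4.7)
The plan is to apply Lemma~\ref{henson-lem} to $\OO_K$. The $\mathcal{L}_{ring}$-parameterized family I use is
\[
\mathcal{F} \;=\; \{Y_t \,:\, t \in \OO_K\},
\]
where $Y_t$ is the set of $\alpha \in \OO_K$ such that $\alpha \neq 0$, $t - \alpha \neq 0$, and both $\alpha$ and $t - \alpha$ are sums of four squares in $\OO_K$. By Siegel's theorem applied in any finite totally real subextension of $K$ containing $\alpha$ and $t$, the sums of four squares in $\OO_K$ coincide with the totally nonnegative elements, so $Y_t$ equals $\{\alpha \in \OO_K : \sigma(\alpha) > 0 \text{ and } \sigma(\alpha) < \sigma(t) \text{ for every embedding } \sigma\}$; this is uniformly first-order in the parameter $t$. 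In particular $Y_n$ coincides with the paper's $X_n$ whenever $n \in \ZZ_{>0}$.

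Next I verify that $\mathcal{F}$ contains finite sets of arbitrarily large cardinality. For any totally positive $t \in \OO_K$, write $m(t) = \min_\sigma \sigma(t)$ and $M(t) = \max_\sigma \sigma(t)$; a coordinatewise comparison then yields the sandwich $X_{m(t)} \subseteq Y_t \subseteq X_{M(t)}$, using the paper's real-parameter notation on the outside. If $\JR(\OO_K) = \infty$, every $X_s$ is finite, hence every $Y_t$ is finite; choosing $t = n \in \ZZ_{>0}$ gives $Y_n = X_n \supseteq \{1, 2, \ldots, n-1\}$, producing the required unbounded cardinalities. If $\JR(\OO_K) = c$ is a finite minimum, then $X_c$ is infinite while $X_s$ is finite for every real $s < c$; since $X_c = \bigcup_{s < c} X_s$ is an increasing union of finite sets, $|X_s| \to \infty$ as $s \to c^{-}$. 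Every $t \in X_c \subset \OO_K$ satisfies $M(t) < c$, so $Y_t$ is automatically finite, and $|Y_t| \geq |X_{m(t)}|$ is large whenever $m(t)$ is close to $c$.

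The main obstacle is the last step in the minimum case: producing algebraic integers $t \in X_c$ with $m(t)$ arbitrarily close to $c$. I plan to exploit the infinitude of $X_c$ together with the abundance of totally positive elements in an infinite totally real $K$; should the single-parameter family prove insufficient, I will pass to the multi-parameter version $Y_{t_1, \ldots, t_k} = \{\alpha : 0 \ll \alpha,\; \alpha \ll t_i \text{ for every } i\}$, which remains uniformly $\mathcal{L}_{ring}$-definable and allows one to control each embedding independently by choosing different $t_i \in X_c$. Once finite sets of unbounded cardinality are extracted, Lemma~\ref{henson-lem} delivers the undecidability of $\OO_K$.
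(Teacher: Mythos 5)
Your overall strategy --- Henson's criterion (Lemma \ref{henson-lem}) applied to definable ``interval'' sets cut out by sums of four squares --- is exactly the intended route; the paper itself only cites \cite{jr} for this theorem, and its own Lemma \ref{key-lem} runs the same mechanism. Your treatment of the case $\JR(\OO_K)=\infty$ is fine. But the minimum case is not proved: you explicitly leave open what you call ``the main obstacle,'' namely producing $t\in X_c$ with $m(t)=\min_\sigma\sigma(t)$ arbitrarily close to $c$, and neither this nor your fallback is the right move. You give no reason such $t$ should exist (the elements of $X_c$ need not be cofinal in the coordinatewise order), and the multi-parameter variant $Y_{t_1,\dots,t_k}=\bigcap_i Y_{t_i}$ only \emph{shrinks} the sets, whereas you need them large. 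The fix is already contained in your own write-up: you observe that $X_c=\bigcup_{s<c}X_s$ forces $\#X_s\to\infty$ as $s\to c^-$, and the sets $X_{m/n}=\{x: 0\ll nx\ll m\}$ for positive integers $m,n$ are uniformly definable with \emph{integer} parameters. Taking rationals $m/n$ increasing to $c$ then yields finite definable sets of unbounded cardinality directly, with no need to locate special ring elements $t$. In short: parametrize by rationals, not by arbitrary totally positive $t\in\OO_K$; this also handles the $\infty$ case uniformly, exactly as in the proof of Lemma \ref{key-lem}.

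A secondary inaccuracy: Siegel's theorem represents a totally positive element of a totally real field as a sum of four squares of \emph{field} elements, not of integers; in $\OO_K$ the sums of four squares of integers form, in general, a proper subset of the totally nonnegative integers (indeed by Siegel's own result in the cited paper, $\QQ$ and $\QQ(\sqrt5)$ are essentially the only totally real fields where totally positive integers are always sums of integer squares). So your asserted identity between $Y_t$ and the coordinatewise interval $\{\alpha : 0<\sigma(\alpha)<\sigma(t)\ \text{for all}\ \sigma\}$ is unjustified. This is harmless in the direction needed for finiteness (sums of squares are totally nonnegative), and for the lower bound in the $\infty$ case Lagrange's theorem on the natural numbers suffices; but to get the containment $X_{m/n}\subseteq S_{m,n}$ needed in the minimum case you should define total positivity with a denominator, e.g.\ $\exists y\neq 0\,\exists x_1,\dots,x_4\,(y^2\alpha=x_1^2+\cdots+x_4^2)$, which by Siegel's theorem (applied in a number field containing $\alpha$) characterizes exactly the totally nonnegative elements of $\OO_K$.
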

 
 Examples where this theorem applies include the ring of all totally real algebraic integers $\ZZ^{\text{tr}}$, and the ring of integers of $\QQ(\{\sqrt{p} : p \text{ prime}\})$.  The JR-numbers are 4 and $\infty$, respectively, in these cases \cite{jr}.   For many years, there were no known examples of rings of totally real integers whose JR-numbers were finite and either different from 4, or not a minimum.  Recently, infinitely many such examples have been constructed by Castillo Fernandez, Vidaux and Videla \cite{castillo-thesis, cfvv, vv-nested}, and by Gillibert and Ranieri \cite{gr}.
For the purposes of this paper, we will only be concerned with totally real fields whose JR-number is infinite.
 
 More examples of totally real fields $K$ with $\JR(\OO_K) = \infty$ come from a connection to the Northcott property, discovered by Vidaux and Videla \cite{vv-northcott}. A set $X$ is said to have the \emph{Northcott property} if
$$
	\{\alpha \in X : \hat{h}(\alpha) < t\}
$$	
is a finite set for every positive real number $t$, where $\hat{h}(\alpha)$ denotes the logarithmic Weil height.

\begin{prop}[{\cite[Theorem 2]{vv-northcott}}]
\label{north-jr}
If a totally real field $K$ has the Northcott property, then $\JR(\OO_K) = \infty$.
 \end{prop}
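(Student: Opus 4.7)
The plan is to show directly that the set $(\OO_K)_t = \{\alpha \in \OO_K : 0 \ll \alpha \ll t\}$ is finite for every positive real $t$; this immediately gives $\JR(\OO_K) = \infty$, since the set over which the defining infimum is taken is empty.

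The key step is a height bound for totally real algebraic integers with bounded conjugates. For any $\alpha \in K$, the logarithmic Weil height decomposes as
\[
	\hat{h}(\alpha) = \frac{1}{[F:\QQ]}\sum_{v} [F_v : \QQ_v] \log \max(1, |\alpha|_v)
\]
for any number field $F$ containing $\alpha$, where $v$ ranges over all places of $F$. If $\alpha$ is integral, then at every non-archimedean place $v$ we have $|\alpha|_v \leq 1$, so only archimedean places contribute. If moreover $\alpha \in (\OO_K)_t$, then every real conjugate of $\alpha$ lies in $(0,t)$, so $|\alpha|_v \leq t$ at all archimedean places. Combining these observations yields $\hat{h}(\alpha) \leq \log \max(1, t)$, a bound depending only on $t$.

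Consequently, $(\OO_K)_t$ is contained in $\{\alpha \in K : \hat{h}(\alpha) < \log \max(1,t) + 1\}$, which is finite by the assumed Northcott property of $K$. Hence $(\OO_K)_t$ is finite for every $t > 0$, and $\JR(\OO_K) = \infty$. The main content of the argument is the observation that for algebraic integers a uniform upper bound on all conjugates automatically bounds the Weil height; once this correspondence is recognized, the proof reduces to a direct application of the definitions, so there is no substantive technical obstacle to overcome.
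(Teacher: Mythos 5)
Your argument is correct: for a totally real algebraic integer $\alpha$ with $0 \ll \alpha \ll t$, the non-archimedean contributions to the Weil height vanish and the archimedean ones are each bounded by $\log\max(1,t)$, so $(\OO_K)_t$ sits inside a set of bounded height and is finite by the Northcott property, whence $\JR(\OO_K)=\infty$ (with the convention $\inf\emptyset=\infty$). The paper does not reprove this statement but cites it to Vidaux--Videla, and your proof is precisely the standard argument behind their result, so nothing further is needed.
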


The following theorem of Bombieri and Zannier provides many examples of fields with the Northcott property; see also \cite{cw,wid} for more examples.  By definition, it suffices to show that the totally real field $K$ is contained in a (possibly imaginary) field which has the Northcott property.

\begin{theorem}[{\cite[Theorem 1]{bz}}]
\label{northcott}
If $F$ is a number field, then $F^{(d)}_{ab}$ has the Northcott property.
 \end{theorem}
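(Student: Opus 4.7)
The plan is to reduce the Northcott property for $F^{(d)}_{ab}$ to a local statement about uniformly bounded local degrees at a single rational prime, via two steps: first establish a global exponent bound for the Galois group $\operatorname{Gal}(F^{(d)}_{ab}/F)$, and then combine local class field theory with the criterion that bounded local degrees at some prime imply Northcott.

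\emph{Step 1 (exponent bound).} I would first observe that $F^{(d)}$ is Galois over $F$ (the set of extensions of degree at most $d$ is permuted by $\operatorname{Gal}(\bar F/F)$). For any extension $E/F$ with $[E:F]\leq d$, the Galois closure $E'/F$ embeds $\operatorname{Gal}(E'/F)$ into $S_d$, so $\operatorname{Gal}(F^{(d)}/F)$ injects into a product of copies of $S_d$. Hence every element has order dividing $N := \exp(S_d) = \operatorname{lcm}(1,2,\ldots,d)$ (since any cycle length $k\leq d$ is realized by a partition of $d$). This exponent bound descends to the abelian quotient $\operatorname{Gal}(F^{(d)}_{ab}/F)$.

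\emph{Step 2 (local degree bound).} Fix a rational prime $p$ and a place $v$ of $F$ above $p$. For any place $w$ of $F^{(d)}_{ab}$ above $v$, the completion $(F^{(d)}_{ab})_w/F_v$ is abelian of exponent dividing $N$, so it is contained in the maximal abelian extension of $F_v$ of exponent $N$. By local class field theory, that maximal extension corresponds to $F_v^\times/(F_v^\times)^N$, which is finite because $F_v^\times \cong \ZZ \times \mu(F_v)\times \ZZ_p^{[F_v:\QQ_p]}$ and each factor becomes finite modulo $N$-th powers. Thus there is a uniform bound $M=M(p,F,d)$ such that $[(F^{(d)}_{ab})_w:\QQ_p]\leq M$ for every $w$ above $p$.

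\emph{Step 3 (Northcott from local degrees).} The conclusion will follow from the principle that if $L/\QQ$ is algebraic and there is some prime $p$ with $[L_w:\QQ_p]\leq M$ uniformly over places $w$ of $L$ above $p$, then $L$ has the Northcott property. For $\alpha\in L$ with $h(\alpha)\leq T$, every Galois conjugate $\sigma(\alpha)$ lies in a completion of degree $\leq M$ over $\QQ_p$; thus only a bounded number of distinct $p$-adic absolute values $|\sigma(\alpha)|_p$ occur among the conjugates. Combining this with the product formula and the Mahler measure bound $\log M(\alpha)\leq [\QQ(\alpha):\QQ]\cdot T$, one extracts a uniform upper bound on $[\QQ(\alpha):\QQ]$, at which point classical Northcott finiteness in each fixed degree completes the proof.

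The main obstacle is Step 3: converting the purely local constraint (bounded local degrees at $p$) into a global finiteness statement. In Bombieri and Zannier's original argument this is the technical heart, requiring careful $p$-adic analysis of how Galois conjugates distribute among the finitely many embeddings of $L$ into $\bar\QQ_p$ and an application of product-formula estimates together with a Mahler-measure lower bound, whereas Steps 1 and 2 are largely formal consequences of the definitions and local class field theory.
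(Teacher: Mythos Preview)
The paper does not prove this theorem; it is quoted from Bombieri--Zannier \cite{bz} and used as a black box in the proof of Theorem~\ref{full-thm}. There is therefore no proof in the paper to compare your attempt against.

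That said, your outline does follow the architecture of the Bombieri--Zannier argument: bound the exponent of $\Gal(F^{(d)}_{ab}/F)$, deduce from local class field theory that the local degrees are uniformly bounded, and then pass from bounded local degrees to the Northcott property. Steps~1 and~2 are correct, and in fact they yield bounded local degrees at \emph{every} rational prime $p$, not just one. You should use this stronger conclusion: Step~3 as you have phrased it---that bounded local degrees at a \emph{single} prime already force the Northcott property---is not what Bombieri--Zannier prove, and your sketch of how a degree bound on $[\QQ(\alpha):\QQ]$ would emerge from a single-prime constraint via the product formula and a Mahler-measure inequality is, as you acknowledge, not a proof. The actual technical core of \cite{bz} is a height lower bound obtained by aggregating $p$-adic contributions over infinitely many primes, and it genuinely requires the local-degree bound at infinitely many (indeed all) primes simultaneously. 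So the correct formulation of Step~3 is the multi-prime one, which your Steps~1 and~2 already set up; with that adjustment your plan matches the original argument.
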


%%%
%
%%%
\subsection{Moving to totally imaginary fields}
Recall that the field $\QQ^{(2)}$ is a totally imaginary quadratic extension of the totally real field $K = \QQ(\{\sqrt{p} : p \text{ prime}\})$.
The proof of the undecidability of $\QQ^{(2)}$ given by Mart\'{\i}nez-Ranero, Utreras and Videla \cite{mruv} uses the fact that $\JR(\OO_K) =\infty$ to deduce that $\OO_{\QQ^{(2)}}$ is also undecidable.  Essentially, they find a special set which is definable in $\OO_{\QQ^{(2)}}$ and contains only totally real elements, which allows them to apply some of the methods created for the totally real case.  
Below, we make this explicit by generalizing their strategy to a set of lemmas which provide a sufficient condition for undecidability.
  
  \begin{lemma} 
  \label{key-lem}
 If there is a first-order definable subset $W\subseteq \OO_L$ such that $\NN \subseteq W\subseteq \OO_K$, then the first-order theory of $\OO_L$ is undecidable.
   \end{lemma}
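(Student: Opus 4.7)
The plan is to invoke Henson's criterion (Lemma \ref{henson-lem}) by producing an $\mathcal{L}_{ring}$-definable family of subsets of $\OO_L$ which contains finite sets of arbitrarily large cardinality. The set $W$ will be the basic building block; the key is to use it to simultaneously encode a notion of ``totally non-negative'' and of ``bounded above by $t$'' within $\OO_L$, even though $L$ itself is totally imaginary so the usual Siegel sum-of-four-squares characterization fails for elements of $\OO_L$.

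First I would carve out a totally non-negative piece of $W$ by imitating the sum-of-four-squares trick but insisting that the squares lie inside $W$:
$$W^+ = \{\alpha \in W : \exists \beta_1, \beta_2, \beta_3, \beta_4 \in W,\ \alpha = \beta_1^2 + \beta_2^2 + \beta_3^2 + \beta_4^2\}.$$
Since $W \subseteq \OO_K$ consists of totally real algebraic integers, each $\beta_i^2$ is totally non-negative in $K$, so every element of $W^+$ is totally non-negative. Conversely, since $\NN \subseteq W$ and every natural number is a sum of four squares of natural numbers (Lagrange), we get $\NN \subseteq W^+$. The set $W^+$ is $\mathcal{L}_{ring}$-definable in $\OO_L$ because $W$ is.

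Next, for each parameter $t \in \OO_L$ define
$$V_t = \{\alpha \in W^+ : t - \alpha \in W^+\}.$$
This is a definable family. For $t = n \in \NN$, the set $V_n$ contains $\{0, 1, \ldots, n\}$, since for each such $k$ both $k$ and $n - k$ lie in $\NN \subseteq W^+$; hence $|V_n| \geq n+1$ and the family has unbounded finite cardinality. On the other hand, any $\alpha \in V_n$ lies in $\OO_K$ with $\alpha$ and $n - \alpha$ both totally non-negative, so every real conjugate of $\alpha$ lies in $[0, n]$. Since an algebraic integer with a rational conjugate has that rational as its minimal polynomial's only root, the only elements on the boundary are $0$ and $n$; all other elements of $V_n$ lie in the set $(\OO_K)_{n+1}$ of Section \ref{totally-real-section}.

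The main obstacle is the finiteness step: showing that each $V_n$ is finite tacitly requires $\JR(\OO_K) = \infty$. This is the property the rest of the paper is designed to verify for the fields of interest, via placing $K$ inside some $F^{(d)}_{ab}$ and invoking Theorem \ref{northcott} together with Proposition \ref{north-jr}. Granted this, each $V_n$ is a finite subset of $\OO_L$ of cardinality at least $n+1$, and applying Lemma \ref{henson-lem} to the family $\{V_t\}_{t \in \OO_L}$ yields the undecidability of the first-order theory of $\OO_L$, as required.
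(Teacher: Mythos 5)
Your proof is correct and takes essentially the same route as the paper's: a Henson family built from sums of four squares of elements of $W$, with $\NN \subseteq W$ supplying the lower bound via Lagrange and $\JR(\OO_K) = \infty$ supplying finiteness of the upper bound. The paper likewise relies tacitly on $\JR(\OO_K) = \infty$ (a standing hypothesis in the surrounding discussion, as you correctly flag), and it handles the boundary issue by adding the clauses $ax \neq 0$ and $ax \neq b$ to its defining formula rather than by your minimal-polynomial observation.
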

    \begin{proof} 
    This result follows quickly from Lemma \ref{henson-lem}.
   Using $W$, define a family $\mathcal F$ of subsets of  $\OO_L$ parametrized by the formula $\phi_W(x;a,b)$:
 $$
 	ax\neq 0 
		\wedge ax \neq b 
		\wedge \exists x_1,\dots, x_8\in W
		[ax = x_1^2 + \dots + x_4^2 \wedge (b - ax) = x_5^2 + \dots + x_8^2]
 $$
 If $a,b\in \NN$, then $\phi_W(x;a,b)$ implies $0\ll ax\ll b$, and $\phi_W(n;a,b)$ holds for every natural number $0 <n< \frac{b}{a}$ by Lagrange's four square theorem.  Therefore, we have
 \begin{equation}
 \label{containment-eq}
 	\left\{n\in \NN : 0 < n < \frac{b}{a}\right\} 
		\subseteq \{x\in \OO_L : \phi_W(x;a,b)\}
		\subseteq \left\{x\in \OO_K : 0\ll x\ll \frac{b}{a}\right\}.
 \end{equation}
 for every $\frac{b}{a}\in \QQ_{>0}$.  Because $\JR(\OO_K) = \infty$, the sets on the righthand side above are finite for all $\frac{b}{a}\in \QQ_{>0}$ by definition.  Hence, the family of sets $\mathcal F$ contains finite sets of arbitrarily large size as $\frac{b}{a}\to\infty$, and therefore Lemma \ref{henson-lem} applies.
  \end{proof}

  The proof above demonstrates why we restrict to the case of $\JR(\OO_K) = \infty$.  If we instead have that $\JR(\OO_K) < \infty$ is a minimum, then we could attempt to perform a similar trick. However, the sets on the right-hand side of \eqref{containment-eq} are only finite for $\frac{b}{a} < \JR(\OO_K) < \infty$ in this case.  Thus we would need to replace the lower bound given by $\NN$ on the left-hand side of \eqref{containment-eq} with a larger lower bound to obtain our finite sets of arbitrarily large cardinality. This corresponds to modifying the hypothesis of the lemma to require $S\subseteq W\subseteq \OO_K$ for some subset $S$ such that $\#\{x\in S : \phi_S(x;a,b)\}\to \infty$ as $\frac{b}{a}$ approaches $\JR(\OO_K)$ from below.
There are no obvious candidates for a convenient and useful choice of $S$, due in part to the difficulties involved in constructing examples of totally real fields $K$ such that $\JR(\OO_K)$ is finite.  Hence we will only consider the case $\JR(\OO_K) = \infty$.

The upshot of the previous lemma is that we simply need to produce the desired set $W$. 
In the case of $\QQ^{(2)}$, Mart\'{\i}nez-Ranero, Utreras and Videla use a discrete derivative trick and Hilbert's solution to Waring's problem to produce the set $W$; see \cite[Lemma 7]{mruv}.  Their strategy also works in our more general setting, but we will instead use a theorem of Kamke which solves a more general conjecture of Waring.  In the following section, we will use families of polynomials and the unit group $\OO_L^\times$ to produce the polynomial $f(x)$ and definable set $X_0$ required by the lemma.

\begin{lemma} 
  \label{poly-lem}
Let $f\in \ZZ[x]$ be a nonconstant polynomial,
 and let 
$X_0\subseteq \OO_K$ be a subset which is definable in $\OO_L$.
  If $f(n)\in X_0$ for each sufficiently large natural number $n \geq N_0$, then there is a first-order definable subset $W$ of $\OO_L$ such that $\NN \subseteq W \subseteq \OO_K$.
 \end{lemma}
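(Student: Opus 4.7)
Plan: Since $X_0 \subseteq \OO_K$ is first-order definable in $\OO_L$, any set built from $X_0$ by a fixed-length combination of additions and subtractions is again a first-order definable subset of $\OO_K$. The content of the lemma therefore reduces to realizing every natural number as such a bounded $\ZZ$-linear combination of values $f(n)$ with $n \geq N_0$, which is a polynomial analogue of Waring's problem. This matches the author's stated intention to use a theorem of Kamke.

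The tool I would invoke is Kamke's theorem: for any $f \in \ZZ[x]$ with positive leading coefficient there exist positive integers $s$ and $M$ such that every integer $m \geq M$ in a suitable residue class modulo $\gcd\{f(n) - f(n') : n, n' \geq N_0\}$ can be written as $m = f(n_1) + \dots + f(n_s)$ with $n_i \geq N_0$. After possibly replacing $f$ by $-f$, I may assume the leading coefficient is positive, and by increasing $N_0$ I may assume $f(n) > 0$ for $n \geq N_0$. If the relevant gcd prevents us from hitting every residue class, I would pre-process $f$ by passing to the discrete derivative $f(x+1) - f(x)$ (whose values lie in the subgroup of $\OO_K$ spanned by $X_0$, and whose gcd strictly drops), iterating until the gcd is $1$.

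With Kamke's theorem in hand, I would define
\begin{equation*}
W = \Big\{ x \in \OO_L : \exists\, y_1, \dots, y_s, z_1, \dots, z_s \in X_0, \ x = (y_1 + \dots + y_s) - (z_1 + \dots + z_s) \Big\}.
\end{equation*}
This is first-order definable in $\OO_L$ because $X_0$ is, and $W \subseteq \OO_K$ since $X_0 \subseteq \OO_K$ and $\OO_K$ is closed under sums and differences. Given any $n \in \NN$, I would choose an integer $K \geq M$ with $K + n \geq M$ (and with $K, K+n$ in the representable residue class, which can always be arranged once the gcd has been reduced to $1$). Kamke's theorem then writes $K = f(b_1) + \dots + f(b_s)$ and $K + n = f(a_1) + \dots + f(a_s)$ with $a_i, b_j \geq N_0$; subtracting the two representations gives $n = \sum f(a_i) - \sum f(b_j)$, so $n \in W$ and $\NN \subseteq W$ as required.

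The principal obstacle is the residue/gcd hypothesis in Kamke's theorem: the raw values $\{f(n) : n \geq N_0\}$ need not generate all of $\ZZ$ additively, and one must take care that the pre-processing used to reduce the gcd (passing to differences, combining values at adjacent integers, etc.) stays inside the subgroup generated by $X_0$ so that definability of $W$ and the containment $W \subseteq \OO_K$ are preserved. Once that bookkeeping step is in place, the identification of $W$ with the desired witnessing set is an immediate unpacking of the quantifiers.
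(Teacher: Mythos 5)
Your overall architecture matches the paper's: invoke Kamke's polynomial generalization of Waring's problem, and define $W$ by bounded-length ring-language combinations of elements of $X_0$, so that definability and $W \subseteq \OO_K$ are automatic. The difference trick (representing $n$ as $\sum f(a_i) - \sum f(b_j)$) is a legitimate variant of what the paper does. But there is a genuine gap in how you handle the gcd obstruction, and your proposed fix does not work.

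The problem is the claim that passing to the discrete derivative $f(x+1)-f(x)$ makes the gcd of the value set ``strictly drop.'' It does not: writing $f$ in the binomial basis $f(x)=\sum_k c_k\binom{x}{k}$, the gcd of the values $\{f(n):n\in\ZZ\}$ is $\gcd_k(c_k)$, and discrete differentiation deletes $c_0$ and shifts the remaining coefficients, so the gcd of the values can only stay the same or increase (and the degree drops, so the iteration terminates at a constant). Concretely, for $f(x)=2x^2+2$ every value is even, the derivative $4x+2$ still has all values even, and the next derivative is the constant $4$. In such a case the subgroup of $\ZZ$ generated by $\{f(n):n\geq N_0\}$ is $2\ZZ$, and if $X_0$ happens to consist only of these values, your $W$ --- being contained in the subgroup of $(\OO_K,+)$ generated by $X_0$ --- cannot contain $1$, so $\NN\not\subseteq W$. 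The paper sidesteps this precisely by using the form of Kamke's theorem that represents every $m\in\NN$ as $f(a_1)+\dots+f(a_{s_1})+s_2$ with $s_1+s_2\leq r$: the bounded additive constant $s_2$ absorbs the residue-class obstruction, and it is harmless for definability because $s_2$ is a sum of at most $r$ copies of $1$ in the ring language. Your proof is repaired by adding such a bounded integer term to your definition of $W$ (equivalently, choosing $t$ with $0\leq t<d$ and $n\equiv t\pmod d$, representing $K$ and $K+n-t$, and putting $n=\sum f(a_i)-\sum f(b_j)+t$); without it, the argument fails for any $f$ whose values have a common factor.
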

 
  \begin{proof} 

Define $X_1 = \{\pm x : x\in X_0\}$.
By replacing $f(x)$ with $\pm f(x + k)$ for some $k \geq N_0$,
we can assume that $f(n)\in X_1$ is a nonnegative integer for all integers $n \geq 0$ without loss of generality.
 Then Kamke's theorem \cite{kamke} states that there is an integer $r\geq 1$ such that every $m\in \NN$ can be written in the form
$$
	m = f(a_1) + \dots + f(a_{s_1}) + s_2
$$
where $s_1,s_2\in \NN$ satisfy $s_1 + s_2\leq r$ and $a_1,\dots, a_{s_1}\in \NN$.

Thus, we may simply define
$$
	W = \bigcup_{s_1=0 }^r \bigcup_{s_2=0 }^{r-s_1} \{x_1 + \dots + x_{s_1} + s_2 : x_1,\dots, x_{s_1}\in X_1\}.
$$
  \end{proof}

  %%%
  %
  %%%
  \section{Using the group of units}
 To complete the proof of our main theorem, we will exploit the structure of the group of units $\OO_L^\times$ to apply Lemma \ref{poly-lem}. We begin by recalling two basic facts, which generalize \cite[Lemmas 5-6]{mruv}.   Throughout this section, we assume that $\JR(\OO_K) = \infty$.

\begin{lemma} 
	The group of roots of unity $\mu_L \subset \OO_L^\times$ is finite.
 \end{lemma}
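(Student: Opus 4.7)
The plan is to argue by contradiction using the running assumption $\JR(\OO_K)=\infty$, which forces every $(\OO_K)_t$ with $t<\infty$ to be finite. Suppose $\mu_L$ is infinite. Then $\mu_L$ contains roots of unity of arbitrarily large order, since otherwise $\mu_L$ would be contained in $\mu_N$ for some $N$. The strategy is to extract from each such $\zeta\in\mu_L$ a totally positive element of $\OO_K$ that is bounded above by $4$ in every embedding, and show that different orders yield distinct elements. This produces infinitely many elements of $(\OO_K)_4$ and contradicts the JR-number hypothesis.

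The first key step is the standard CM-type identification: because $K$ is totally real and $L/K$ is totally imaginary of degree $2$, any complex embedding $\sigma\colon L\hookrightarrow\CC$ and its complex conjugate $\bar\sigma$ agree on $K$ (as $\sigma(K)\subset\RR$) but differ on $L$ (as $L$ is totally imaginary), so $\bar\sigma=\sigma\circ\tau$ for $\tau$ the nontrivial element of $\Gal(L/K)$. Specialized to $\zeta\in\mu_L$, this reads $\sigma(\tau(\zeta)) = \overline{\sigma(\zeta)} = \sigma(\zeta)^{-1}$, forcing $\tau(\zeta) = \zeta^{-1}$; in particular $\zeta + \zeta^{-1} \in K$.

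Given this, I would set $\alpha_\zeta := \zeta + \zeta^{-1} + 2 \in \OO_K$ for each $\zeta\in\mu_L$ of order $n\geq 3$. Under any real embedding $\sigma$ of $K$ extended to $L\hookrightarrow\CC$, $\sigma(\zeta)=e^{i\theta}$ is a primitive $n$-th root of unity distinct from $\pm 1$, so $\sigma(\alpha_\zeta) = 2+2\cos\theta \in (0,4)$, i.e.\ $\alpha_\zeta\in(\OO_K)_4$. If $\alpha_\zeta = \alpha_{\zeta'}$, then $\zeta$ and $\zeta'$ both satisfy $x^2 - (\alpha_\zeta-2)x + 1 = 0$, so $\zeta' \in \{\zeta,\zeta^{-1}\}$; in particular $\zeta$ and $\zeta'$ have the same order. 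Picking one root of unity of each of infinitely many distinct orders from $\mu_L$ thus produces infinitely many distinct elements of $(\OO_K)_4$, contradicting $\JR(\OO_K)=\infty$.

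The argument is fairly direct and I do not anticipate a serious technical obstacle: the crux is the elementary fact that the real averages $\zeta+\zeta^{-1}$ of roots of unity all lie in the bounded interval $[-2,2]$, which is incompatible with the Julia Robinson number being infinite. The only step that requires genuine care is the CM-type identification in the second paragraph, where the assumptions that $K$ is totally real and $L$ is totally imaginary are used in an essential way to conclude $\tau(\zeta)=\zeta^{-1}$.
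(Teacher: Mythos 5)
Your proof is correct and follows essentially the same route as the paper: the paper's one‑line argument is exactly that $2+\omega+\omega^{-1}\in\OO_K$ lies in $(\OO_K)_4$, which is finite since $\JR(\OO_K)=\infty$. You have simply filled in the details the paper leaves implicit (why $\omega+\omega^{-1}\in K$, the exclusion of $\omega=\pm1$, and the at‑most‑two‑to‑one fibers of $\omega\mapsto 2+\omega+\omega^{-1}$).
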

  \begin{proof} 
If $\omega \in \mu_L$ is a root of unity, then $2 + \omega + \omega^{-1} \in K$ satisfies 
$
	0 \ll 2 + \omega + \omega^{-1} \ll 4.
$
Because $\JR(\OO_K) >4$, there are only finitely many elements $\alpha\in K$ satisfying $0\ll \alpha\ll 4$.
  \end{proof}

\begin{lemma}
\label{cm-lem}
	Write $\#\mu_L = 2N$.  If $u\in \OO_L^\times$, then $u^{2N}\in \OO_K^\times$.
 \end{lemma}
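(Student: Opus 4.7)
The plan is to exploit the fact that $L/K$ behaves like a CM extension locally: since $K$ is totally real and $L$ is a totally imaginary quadratic extension, the nontrivial element $\sigma$ of $\Gal(L/K)$ is induced by complex conjugation at every archimedean place. I would let $\sigma \in \Gal(L/K)$ be this nontrivial element and study the ratio $u/\sigma(u)$.

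First I would verify the archimedean compatibility. For any embedding $\tau : L \to \CC$, the image $\tau(K) \subset \RR$ because $K$ is totally real, and $\tau(L) \not\subset \RR$ because $L$ is totally imaginary. Hence the restriction of complex conjugation on $\CC$ to $\tau(L)$ is a nontrivial automorphism of $\tau(L)/\tau(K)$, so it must coincide with the image of $\sigma$. Concretely, $\tau(\sigma(u)) = \overline{\tau(u)}$, and therefore $|\tau(u/\sigma(u))| = |\tau(u)|/|\overline{\tau(u)}| = 1$ for every embedding $\tau$.

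Next I would observe that $u/\sigma(u) = u \cdot \sigma(u)^{-1}$ is an algebraic integer, being a product of units in $\OO_L$. Working inside the number field $\QQ(u,\sigma(u))$, all $\QQ$-conjugates of $u/\sigma(u)$ have absolute value $1$, so Kronecker's theorem on algebraic integers whose conjugates lie on the unit circle forces $u/\sigma(u)$ to be a root of unity. Since it lies in $L$, it lies in $\mu_L$, so $(u/\sigma(u))^{2N} = 1$.

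Raising to the $2N$-th power then gives $u^{2N} = \sigma(u)^{2N} = \sigma(u^{2N})$, so $u^{2N}$ is fixed by $\sigma$ and therefore lies in $K$. Because $u^{2N} \in \OO_L^\times \cap K = \OO_K^\times$, the conclusion follows. I do not expect any real obstacle here; the only point requiring slight care is confirming that Kronecker's theorem applies even though $L$ itself is infinite over $\QQ$, which is immediate since $u/\sigma(u)$ generates a number field in which the bound on conjugates holds.
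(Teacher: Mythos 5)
Your proof is correct, but it takes a different route from the paper. The paper reduces to the number-field setting by writing $L$ as a union of CM number fields $L_n$ over totally real $K_n$ and then cites Washington's Theorem 4.12, which bounds the unit index $[\OO_{L_n}^\times : \mu_{L_n}\OO_{K_n}^\times]$ by $2$; from $u^2 = \zeta v$ with $\zeta \in \mu_{L_n}$ and $v \in \OO_{K_n}^\times$ one then gets $u^{2N} = \zeta^N v^N = \pm v^N \in \OO_{K_n}^\times$. You instead give a self-contained argument: complex conjugation induces the nontrivial element $\sigma$ of $\Gal(L/K)$ at every archimedean place, so $u/\sigma(u)$ is a unit all of whose conjugates lie on the unit circle, hence a root of unity by Kronecker, hence killed by the exponent $2N$ of $\mu_L$. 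This is in fact the standard proof of the very index theorem the paper cites, so the underlying mathematics is the same, but your version avoids both the tower decomposition and the external reference, and it applies directly to the infinite extension $L$ (you correctly note that Kronecker's theorem is applied inside the number field $\QQ(u,\sigma(u))$). The only point you gloss over is why $\tau(L)$ is stable under complex conjugation for each embedding $\tau$ (so that conjugation really does restrict to an automorphism of $\tau(L)/\tau(K)$); this is immediate because $\tau(L)$ is a quadratic extension of the real field $\tau(K)$ and therefore contains both roots of the defining quadratic, whose coefficients are real, but it is worth a sentence. With that remark added, your argument is complete.
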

  \begin{proof} 
  Use the following notation.  Let $K = \cup_{i = 0}^\infty K_i$ where $K_0 \subseteq K_1\subseteq \dots$ is an infinite tower of totally real number fields such that $L_0$ is a totally imaginary quadratic extension of $K_0$, $L_n = K_n L_0$ and $L = KL_0$.
The lemma then immediately follows from the fact that $[\OO_{L_n}^\times : \mu_{L_n} \OO_{K_n}^\times]\in \{1,2\}$ for all $n\geq 0$ \cite[Theorem 4.12]{w}.
  \end{proof}
  
  The previous lemma implies $(\OO_L^\times)^{2N}\subseteq \OO_K$ is definable in $\OO_L$, which will allow us to produce a subset $X_0\subseteq \OO_K$ which is definable in $\OO_L$.
   Next, we will define a useful multivariable polynomial, then specialize it to a certain single-variable polynomial $f(x)$ which satisfies Lemma \ref{poly-lem}.
   
We will use the following notation.  For each $k,n\geq 1$, write $q_k(x_1,\dots, x_{n}) = x_1^k + \dots + x_{n}^k$ for the $k$-th power-sum polynomial, and let $s_k$ be the $k$-th elementary symmetric polynomial.  The \emph{Newton-Girard formulae} provide the following relation for any $k,n\geq 1$.  For ease of notation, we suppress the variables on the right-hand side.
   \begin{align*}
		q_k(x_1,\dots, x_{n})
			&= (-1)^{k-1} ks_k +\sum_{i = 1}^{k-1} (-1)^{k+i-1}s_{k-i} q_{i}.
   \end{align*}
  
  \begin{lemma} 
  \label{poly-sum}
Given any integers $m, n \geq 1$, there is a polynomial 
$$
	Q_m(x_0, \dots, x_{n-1})\in \ZZ[x_0, \dots, x_{n-1}]
$$
 such that for any $(a_0,\dots, a_{n-1})\in \ZZ^n$,
  $$
  	Q_m(a_0, \dots, a_{n-1}) = \alpha_1^m + \dots + \alpha_n^m
  $$
   where $\alpha_1,\dots, \alpha_n$ are the roots of $f(x) = x^n + a_{n-1}x^{n-1} + \dots + a_0\in \CC[x]$.
   \end{lemma}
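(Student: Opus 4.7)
The plan is a direct induction on $m$ using the Newton--Girard formula displayed just before the statement, combined with Vieta's formulas. If $\alpha_1, \dots, \alpha_n$ are the roots of $f(x) = x^n + a_{n-1}x^{n-1} + \dots + a_0$, then Vieta tells us $s_k(\alpha_1, \dots, \alpha_n) = (-1)^k a_{n-k}$ for $1 \leq k \leq n$, and $s_k(\alpha_1, \dots, \alpha_n) = 0$ for $k > n$. In particular, for each $k$, the value $s_k(\alpha_1, \dots, \alpha_n)$ is given by a polynomial (in fact, a monomial or zero) in $\ZZ[x_0, \dots, x_{n-1}]$ evaluated at $(a_0, \dots, a_{n-1})$; call this polynomial $S_k(x_0, \dots, x_{n-1})$.

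For the base case $m = 1$, we take $Q_1(x_0, \dots, x_{n-1}) = -x_{n-1}$, since $q_1(\alpha_1, \dots, \alpha_n) = s_1(\alpha_1, \dots, \alpha_n) = -a_{n-1}$. For the inductive step, assume that polynomials $Q_1, \dots, Q_{m-1}$ with the required property have been constructed in $\ZZ[x_0, \dots, x_{n-1}]$. Applying the Newton--Girard identity with $k = m$ at the tuple $(\alpha_1, \dots, \alpha_n)$ gives
\[
	q_m(\alpha_1, \dots, \alpha_n) = (-1)^{m-1} m\, s_m(\alpha_1, \dots, \alpha_n) + \sum_{i=1}^{m-1} (-1)^{m+i-1} s_{m-i}(\alpha_1, \dots, \alpha_n)\, q_i(\alpha_1, \dots, \alpha_n).
\]
Substituting $s_j(\alpha_1, \dots, \alpha_n) = S_j(a_0, \dots, a_{n-1})$ and $q_i(\alpha_1, \dots, \alpha_n) = Q_i(a_0, \dots, a_{n-1})$, we see that the right-hand side equals the evaluation at $(a_0, \dots, a_{n-1})$ of the polynomial
\[
	Q_m(x_0, \dots, x_{n-1}) := (-1)^{m-1} m\, S_m(x_0, \dots, x_{n-1}) + \sum_{i=1}^{m-1} (-1)^{m+i-1} S_{m-i}(x_0, \dots, x_{n-1})\, Q_i(x_0, \dots, x_{n-1}),
\]
which lies in $\ZZ[x_0, \dots, x_{n-1}]$ because every $S_j$ and every $Q_i$ already does and the recurrence uses only integer coefficients.

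There is essentially no serious obstacle here: the work is entirely formal once one recognizes that Newton--Girard has integer coefficients and that Vieta gives the $s_k(\alpha)$ as integer polynomials (in fact monomials) in the $a_i$. The only minor care point is the case distinction $m \leq n$ versus $m > n$, which is handled uniformly by defining $S_k = 0$ for $k > n$ so that the term $(-1)^{m-1} m\, S_m$ simply vanishes outside the range $1 \leq m \leq n$.
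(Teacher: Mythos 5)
Your proof is correct and follows essentially the same route as the paper: Vieta's formulas express each $s_k(\alpha_1,\dots,\alpha_n)$ as an integer monomial in the coefficients, and the Newton--Girard recurrence with induction on $m$ then produces $Q_m \in \ZZ[x_0,\dots,x_{n-1}]$. The paper states this more tersely; your explicit handling of the convention $S_k = 0$ for $k > n$ is a reasonable added detail.
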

    \begin{proof} 
   For any polynomial $x^n + a_{n-1}x^{n-1} + \dots + a_0 \in \CC[x]$, the roots $\alpha_1,\dots, \alpha_n\in \CC$ satisfy
   \begin{align*}
   	s_1(\alpha_1,\dots, \alpha_n) &= \alpha_1 +\dots + \alpha_n = -a_{n-1}\\
	\vdots\\ 
	s_n(\alpha_1, \dots, \alpha_n) &= \alpha_1\dots\alpha_n = (-1)^n a_0.
   \end{align*}  
   
   Therefore, by using the Newton-Girard formulae and induction, we can write $q_m(\alpha_1,\dots, \alpha_n)$ as a polynomial in $a_{n-k} = (-1)^{k}s_k(\alpha_1, \dots, \alpha_n)$ for $1\leq k\leq n$, as claimed.
    \end{proof}

We focus our attention on the values that $Q_m$ takes on the coefficients of particular families of polynomials whose roots are totally real units.  Importantly, we need the resulting single-variable polynomial to be nonconstant to apply Lemma \ref{poly-lem}.

  \begin{lemma} 
  Let $p_0(t), \dots, p_{n-1}(t)\in \ZZ[t]$ be polynomials which parametrize a family of polynomials 
  $$
	\{f_a(x) = x^n + p_{n-1}(a)x^{n-1} + \dots p_1(a)x +p_0(a) : a\in \ZZ_{\geq N_0}\}
$$
where $p_{j}(x)$ is nonconstant for some $0\leq j\leq n-1$.  For any $N \geq 1$, there is some $k\geq 1$ such that 
$$
	Q_{kN}(p_0(x), \dots, p_{n-1}(x))
$$
is nonconstant.
\label{nonconst-lem} 
   \end{lemma}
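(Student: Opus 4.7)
The plan is to argue by contradiction. Suppose that for every $k \geq 1$, the polynomial $Q_{kN}(p_0(t), \ldots, p_{n-1}(t)) \in \ZZ[t]$ is constant; I will show that this forces every $p_i(t)$ to be constant, contradicting the hypothesis that some $p_j$ is nonconstant.

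First I would reinterpret things algebraically: let $\alpha_1, \ldots, \alpha_n \in \overline{\QQ(t)}$ denote the roots of $f_t(x)$ viewed as a polynomial in $x$ over $\QQ(t)$. Lemma \ref{poly-sum} gives the identity
$$
Q_m(p_0(t), \ldots, p_{n-1}(t)) \;=\; \alpha_1^m + \ldots + \alpha_n^m
$$
in $\overline{\QQ(t)}$, and setting $\beta_i := \alpha_i^N$, our assumption translates to $q_k(\beta_1,\ldots,\beta_n) \in \QQ$ for every $k \geq 1$.

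Next I would apply the Newton-Girard formulae in the opposite direction from Lemma \ref{poly-sum}: since the power sums $q_k(\beta_1,\ldots,\beta_n)$ all lie in $\QQ$, an easy induction shows that all elementary symmetric functions $s_k(\beta_1,\ldots,\beta_n)$ do as well. Hence $\prod_{i=1}^n (x-\beta_i) \in \QQ[x]$, so each $\beta_i \in \overline{\QQ}$. Since $\alpha_i$ is a root of $x^N - \beta_i \in \overline{\QQ}[x]$, we also conclude $\alpha_i \in \overline{\QQ}$. But then $p_j(t) = (-1)^{n-j} s_{n-j}(\alpha_1,\ldots,\alpha_n) \in \overline{\QQ}$ for every $j$, and because $p_j(t) \in \ZZ[t]$ with $t$ transcendental over $\QQ$, this forces each $p_j(t)$ to be a constant polynomial, the desired contradiction.

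The main conceptual point --- and really the only subtle step --- is interpreting the $\alpha_i$ as elements of $\overline{\QQ(t)}$ rather than as continuous multivalued algebraic functions on $\CC$, so that the conclusion ``$\alpha_i^N \in \overline{\QQ}$'' becomes an algebraic statement usable via the transcendence of $t$. Once that framing is in place, everything else is a direct application of Newton-Girard together with elementary field theory, which is presumably why the proof can be compressed to just a few lines.
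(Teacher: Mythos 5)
Your proof is correct, but it takes a genuinely different route from the paper. The paper argues constructively in two steps: for $N=1$ it takes the smallest index $j_0$ with $s_{j_0}(\alpha_{1,a},\dots,\alpha_{n,a})$ nonconstant and reads off from Newton--Girard that $q_{j_0}$ is then nonconstant; for $N>1$ it forms the auxiliary family $\hat f_a(x)=\prod_i(x-\alpha_{i,a}^N)$, invokes the fundamental theorem of symmetric functions to check that its coefficients are again integer polynomials in $a$, argues from the infinitude of the original family that one of these is nonconstant, and applies the base case. You instead run Newton--Girard in the reverse direction inside $\overline{\QQ(t)}$: constancy of all the $Q_{kN}$ forces all power sums of the $\beta_i=\alpha_i^N$ into $\QQ$, hence all elementary symmetric functions, hence $\beta_i\in\overline{\QQ}$, hence $\alpha_i\in\overline{\QQ}$, hence every $p_j$ constant --- a contradiction. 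Each step checks out (in particular the division by $k$ in recovering $s_k$ is harmless in characteristic $0$, and $\overline{\QQ}\cap\QQ(t)=\QQ$ since $\QQ$ is algebraically closed in $\QQ(t)$). Your argument is shorter and avoids both the auxiliary family and the symmetric-function bookkeeping; what it gives up is the explicit identification of a working $k$ (the paper gets $k=j_0\le n$), though your induction only uses $q_1(\beta),\dots,q_n(\beta)$, so the contrapositive in fact also yields some $k\le n$ if you care to say so.
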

    \begin{proof} 
Factor each polynomial $f_a(x) = \prod_{i = 1}^n (x - \alpha_{i,a})$ over the algebraic closure. First consider the case of $N = 1$.    By assumption, there is a smallest index $1\leq j_0\leq n$ such that $s_{j_0}(\alpha_{1,a},\dots, \alpha_{n,a})= (-1)^{j_0}p_{n-j_0}(a)$ is nonconstant as $a$ varies.  For each $1\leq k\leq n$, the Newton-Girard formulae
$$
	q_{k}
		 =  (-1)^{k-1} k s_{k} +\sum_{i = 1}^{k-1} (-1)^{k+i-1}s_{k-i} q_{i}.
$$
can be expanded recursively to write $q_k$ in terms of $s_1, \dots, s_{k}$. This
implies that 
$$
	Q_{k}(p_0(a), \dots, p_{n-1}(a)) = q_k(\alpha_{1,a},\dots, \alpha_{n,a})
$$
 is constant for $1\leq k < j_0-1$, and nonconstant for $k = j_0$.

Now let $N > 1$. We will reduce to the previous case by defining a new family of polynomials  $\hat{f}_a(x) = \prod_{i = 1}^n (x - \alpha_{i,a}^N)$.  First, we show that the new family $\{\hat{f}_a(x) : a\in \ZZ_{\geq N_0}\}$ is also parametrized.
For each $1\leq j\leq n$, the $(n-j)$-th coefficient of $\hat{f}_a(x)$ is equal to $(-1)^j s_j(\alpha_1^N, \dots, \alpha_n^N)$.
 Clearly the polynomial $s_j(x_1^N,\dots, x_n^N)$ is invariant under permutation of variables, so there is a polynomial $g_j(t_1,\dots, t_n)\in \ZZ[t_1,\dots, t_n]$ such that 
   $$
   	s_j(x_1^N,\dots, x_n^N) = g_j(s_1(x_1,\dots, x_n), \dots, s_n(x_1,\dots, x_n));
$$
see \cite[Theorem IV.6.1]{lang}.
Therefore,
$$
	\hat{f}_a(x) = x^n + \hat{p}_{n-1}(a) x^{n-1} + \dots + \hat{p}_0(a)
$$
where $\hat{p}_{n-j}(x) = (-1)^{j} g_{j}(-p_{n-1}(x),\dots, (-1)^np_0(x))$ for each $1\leq j \leq n$. 

This shows that $\{\hat{f}_a(x) : a\in \ZZ_{\geq N_0}\}$ is a parametrized family of polynomials, so it only remains to see that some $\hat{p}_j(x)$ is nonconstant.  If $\hat{p}_j(x)$ is constant for all $1\leq j\leq n-1$, then the family $\{\hat{f}_a(x): a\in \ZZ_{\geq N_0}\}$  contains a single polynomial.  But this is clearly impossible by the definition of $\hat{f}_a(x)$ because the family $\{{f}_a(x): a\in \ZZ_{\geq N_0}\}$ is infinite by assumption.  Hence applying the base case to the family $\{\hat{f}_a(x) : a\in \ZZ_{\geq N_0}\}$ completes the proof because
$$
	Q_{kN}(p_0(a), \dots, p_{n-1}(a))
	=\alpha_{1,a}^{kN} + \dots +  \alpha_{n,a}^{kN}
	=Q_{k}(\hat{p}_0(a), \dots, \hat{p}_{n-1}(a))
$$
for all $k\geq 1$ by construction.
    \end{proof}

  We are now ready to prove the main theorem of this section on the undecidability of rings of integers in totally imaginary fields.

  \begin{theorem} 
  \label{main-thm}
Let $K$ be a totally real field with $\JR(\OO_K) = \infty$, and let $p_0(t), \dots, p_{n-1}(t)\in \ZZ[t]$ be polynomials which parametrize a family of polynomials 
  $$
	\{f_a(x) = x^n + p_{n-1}(a)x^{n-1} + \dots p_1(a)x +p_0(a)\}
$$
where $p_0(a) = \pm1$ is constant, and $p_{j_0}(t)$ is nonconstant for some $1\leq j_0\leq n-1$.
Assume that $K$ contains all roots of $f_a(x)$ for all natural numbers $a \geq N_0$.
 If $L$ is a totally imaginary quadratic extension of $K$, then the first-order theory of $\OO_L$ is undecidable.
   \end{theorem}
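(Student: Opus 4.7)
The plan is to assemble the definable subset $W$ required by Lemma \ref{key-lem} by combining the unit group trick from Lemma \ref{cm-lem} with the polynomial construction of Lemmas \ref{poly-sum} and \ref{nonconst-lem}, then invoking Lemma \ref{poly-lem}. Concretely, writing $\#\mu_L = 2N$, Lemma \ref{cm-lem} gives $(\OO_L^\times)^{2N} \subseteq \OO_K^\times$. Hence, for any integer $k \geq 1$, the $n$-fold sumset
$$
 X_0 \;=\; \{\, u_1^{2Nk} + u_2^{2Nk} + \cdots + u_n^{2Nk} \;:\; u_1, \ldots, u_n \in \OO_L^\times \,\}
$$
is first-order definable in $\OO_L$ and contained in $\OO_K$.

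Next I would exploit the hypothesis $p_0(a) = \pm 1$. The polynomial $f_a(x)$ is monic over $\ZZ$ with constant term $\pm 1$, so its roots $\alpha_{1,a}, \ldots, \alpha_{n,a}$ are algebraic integers whose product is $\pm 1$, and therefore units. Since $K$ contains these roots by assumption, we in fact have $\alpha_{i,a} \in \OO_K^\times \subseteq \OO_L^\times$ for every $1 \leq i \leq n$ and every $a \geq N_0$.

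Apply Lemma \ref{nonconst-lem} to the given parametrized family, with the exponent $N$ there taken to be $2N$, obtaining $k \geq 1$ such that
$$
 f(x) \;:=\; Q_{2Nk}\bigl(p_0(x),\ldots,p_{n-1}(x)\bigr) \;\in\; \ZZ[x]
$$
is nonconstant. By construction (via Lemma \ref{poly-sum}) we have, for every integer $a \geq N_0$,
$$
 f(a) \;=\; \alpha_{1,a}^{2Nk} + \cdots + \alpha_{n,a}^{2Nk} \;\in\; X_0.
$$
Thus the hypotheses of Lemma \ref{poly-lem} are satisfied with this $f$ and this $X_0$, producing a first-order definable $W \subseteq \OO_L$ with $\NN \subseteq W \subseteq \OO_K$. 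An appeal to Lemma \ref{key-lem} then yields that the first-order theory of $\OO_L$ is undecidable.

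The main obstacle is producing a \emph{nonconstant} polynomial $f \in \ZZ[x]$ out of the power-sum data; this is exactly what Lemma \ref{nonconst-lem} delivers, and its statement is finely matched to the hypotheses of the theorem. Observe the essential role of the two conditions on the family: requiring some $p_j(t)$ to be nonconstant is what guarantees via Newton--Girard that a high enough power-sum depends on $a$, while requiring $p_0(a) = \pm 1$ is what makes the roots \emph{units} and hence lets us leverage $\OO_L^\times$ to define $X_0$ inside $\OO_L$. Everything else is a routine assembly of the earlier lemmas.
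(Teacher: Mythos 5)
Your proposal is correct and follows essentially the same route as the paper: choose $f(x) = Q_{2Nk}(p_0(x),\dots,p_{n-1}(x))$ with $k$ from Lemma \ref{nonconst-lem}, take $X_0$ to be sums of $2N$-th (or $2Nk$-th) powers of units, and feed these into Lemma \ref{poly-lem}. The only cosmetic difference is that the paper defines $X_0$ with exponent $2N$ rather than $2Nk$; both versions are definable and contained in $\OO_K$ by Lemma \ref{cm-lem}, so the argument is the same.
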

   
    \begin{proof} 
   By Lemma \ref{poly-lem}, it suffices to find a nonconstant polynomial $f$ and a definable subset $X_0\subseteq \OO_K$ such that $f(n) \in X_0$ for all sufficiently large $n \in \ZZ_{\geq N_0}$.
   
Let $N = \#\mu_L$, as in Lemma \ref{cm-lem}. We choose our polynomial to be 
$$
	f(x) = Q_{2Nk}(p_0(x), \dots, p_{n-1}(x)),
$$ where $Q_{2Nk}$ is defined in Lemma \ref{poly-sum} and $k$ is chosen according to Lemma \ref{nonconst-lem} so that $f(x)$ is nonconstant.  By definition, $f(a) = Q_{2N}(p_0(a), \dots, p_{n-1}(a))$ is the sum of $2N$-th powers of units of $\OO_K$ for each $a\geq N_0$, so we may define 
   $$
   	X_0 = \{\alpha_1^{2N} + \dots + \alpha_n^{2N} \mid \alpha_i\in \OO_L^\times\}.
$$ 
  This subset is definable in $\OO_L$, contains $f(n)$ for $n \geq N_0$ by assumption, and $X_0\subseteq \OO_K$ by Lemma \ref{cm-lem}.
    \end{proof}

By using the results discussed in the previous section, this theorem implies the our main theorem on the undecidability of totally imaginary fields.

\begin{theorem} 
 \label{full-thm}
 Let $K$ be an infinite totally real extension of $\QQ$ which is contained in $F^{(d)}_{ab}$ for some $d\geq 2$ and some number field $F$.  Assume $K$ contains all roots of a parametrized family of polynomials 
  $$
	\{f_a(x) = x^n + p_{n-1}(a)x^{n-1} + \dots p_1(a)x +p_0(a) \mid a\in \ZZ_{> N_0}\}
$$
where  each $p_i(t)\in \ZZ[t]$ is a polynomial, $p_0(t) = \pm1$ is constant and $p_j(t)$ is nonconstant for some $1\leq j\leq n-1$.  If $L$ is any  totally imaginary quadratic extension of $K$, then the first-order theory of  $L$ is undecidable.
  \end{theorem}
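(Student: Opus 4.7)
The plan is to assemble the results established in earlier sections. Theorem \ref{main-thm} already carries the main payload, so what remains is only to verify its Northcott hypothesis from the assumption $K \subseteq F^{(d)}_{ab}$, and then to transfer the resulting undecidability from $\OO_L$ to $L$ via a definability result.

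First, I would establish that $\JR(\OO_K) = \infty$. Since $K \subseteq F^{(d)}_{ab}$, the Bombieri--Zannier theorem (Theorem \ref{northcott}) gives that $F^{(d)}_{ab}$ has the Northcott property, and this property is trivially inherited by any subfield, so $K$ has the Northcott property as well. Because $K$ is totally real, Proposition \ref{north-jr} then yields $\JR(\OO_K) = \infty$. With this in hand, every remaining hypothesis of Theorem \ref{main-thm} is matched verbatim by the assumptions of Theorem \ref{full-thm} -- the parametrized family $\{f_a(x)\}$ has integer polynomial coefficients with $p_0(t) = \pm 1$ constant and some nonconstant $p_j$ for $1 \leq j \leq n-1$, all roots lie in $K$, and $L$ is a totally imaginary quadratic extension of $K$ -- so Theorem \ref{main-thm} immediately gives that $\OO_L$ has undecidable first-order theory.

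To pass from $\OO_L$ to $L$ itself, I would invoke Shlapentokh's definability result, Theorem \ref{definability}. This requires exhibiting $L$ as the compositum of finite extensions of $\QQ$ of degree less than some fixed integer. By construction, $F^{(d)}$ is the compositum of extensions of $F$ of degree at most $d$, hence of extensions of $\QQ$ of degree at most $d[F:\QQ]$. Since $K \subseteq F^{(d)}_{ab} \subseteq F^{(d)}$, the field $K$ is itself a compositum of extensions of $\QQ$ of degree at most $d[F:\QQ]$. Writing $L = K(\sqrt{-\delta})$ for some $\delta \in K$, the single additional generator $\sqrt{-\delta}$ has some finite degree $m$ over $\QQ$, so $L$ is a compositum of extensions of $\QQ$ of degree at most $\max(d[F:\QQ], m)$. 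Theorem \ref{definability} then yields that $\OO_L$ is first-order definable in $L$, and the undecidability of the first-order theory of $L$ follows immediately from that of $\OO_L$.

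There is no substantive obstacle here: all the serious work has been encapsulated in Theorem \ref{main-thm} and its supporting lemmas, and Theorem \ref{full-thm} functions essentially as a packaging statement. The only minor care required is in checking the compositum hypothesis of Theorem \ref{definability}, which is harmlessly satisfied once one observes that adjoining the single element $\sqrt{-\delta}$ of finite degree does not disturb the bounded-degree generation already enjoyed by $K$ inside $F^{(d)}$.
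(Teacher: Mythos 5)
Your proof takes exactly the same route as the paper: deduce $\JR(\OO_K)=\infty$ from Theorem \ref{northcott} and Proposition \ref{north-jr}, apply Theorem \ref{main-thm} to get undecidability of $\OO_L$, and transfer to $L$ via Theorem \ref{definability}; the paper's proof is precisely this three-line assembly. One caution on the extra detail you supply for the definability step: the inference ``$K \subseteq F^{(d)}$, hence $K$ is itself a compositum of extensions of $\QQ$ of degree at most $d[F:\QQ]$'' is not valid as stated, since a subfield of a compositum of bounded-degree extensions need not be generated by elements of bounded degree (one needs, e.g., the finite exponent of $\Gal(F^{(d)}_{ab}/F)$ to recover such a generating set); the paper leaves this verification implicit, so your writeup is no worse off, but the justification you give does not actually close that point.
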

   \begin{proof} 
  Using Proposition \ref{north-jr} and Theorem \ref{northcott}, we see that $\JR(\OO_K) = \infty$.  By Theorem \ref{definability}, $\OO_L$ is definable in $L$. Thus the undecidability follows from Theorem \ref{main-thm}.
   \end{proof}

%%%
%
%%%
\section{Examples}
\label{examples}

We will now give some concrete examples of families of polynomials $\{f_a(x) : a\in \ZZ_{\geq N_0}\}$ which satisfy Theorem \ref{full-thm}.   In each case, $K$ can be taken to be the totally real field generated by all roots of the polynomials $\{f_a(x) \}$, or any extension thereof which is contained in $F^{(d)}_{ab}$ for some number field $F$ and some integer $d\geq 1$.  Then Theorem \ref{full-thm} implies that any totally imaginary quadratic extension $L$ of $K$ has undecidable first-order theory.

%%%
%
%%%
\subsection{Polynomials generating cyclic extensions of $\QQ$:}
\begin{enumerate}
	\item \textbf{The quadratic case:} Choosing $f_a(x) = x^2 - 2ax - 1$ produces the family of polynomials considered by Mart\'{\i}nez-Ranero, Utreras and Videla to prove the undecidability of $\QQ^{(2)}$ \cite[Lemma 7]{mruv} .
	More generally, one may use the family of polynomials $x^2 - p(a)x - 1$, where $p(t)$ is any nonconstant polynomial.  
	Using these polynomials also shows that the fields $\QQ^{(d)}_{ab}$ are undecidable for all $d \geq 2$, since each field is a totally imaginary quadratic extension of a totally real field.
	\item  \textbf{The cubic case:} Shanks describes some ``simplest cubic extensions" \cite{shanks} which are totally real and generated by roots of polynomials of the form 
	$$
		 x^3 - ax^2 - (a + 3)x - 1
	$$ for $a \geq -1$. Similarly, Kishi \cite{kishi} gives the family of polynomials
	$$
		x^3 - n(n^2 + n+3)(n^2+2)x^2 - (n^3 + 2n^2 + 3n + 3)x - 1
	$$
	for $n \in \ZZ$. Each polynomial in both families generates a totally real cyclic cubic extension of $\QQ$.
	\item  \textbf{The quartic case:} For $t\geq 4$, the following polynomials, constructed by Gras \cite[Proposition 6]{gras-quartic}, generate cyclic quartic totally real extensions of $\QQ$.
	$$
		x^4 - tx^3 -6x^2 +tx +1.
	$$
	
	\item  \textbf{The quintic case:} The following quintic polynomials, found by E. Lehmer, generate cyclic quintic totally real extensions of $\QQ$ for any $a\in \ZZ$; see the paper of Schoof and Washington \cite[\S3]{sw}.
	\begin{align*}
		x^5 + a^2x^4 &- (2a^3 +6a^2 + 10a + 10)x^3\\
			 &+ (a^4 + 5a^3 + 11a^2 + 15a + 5)x^2 + (a^3 + 4a^2 + 10a + 10) x + 1.
	\end{align*}
	
	\item  \textbf{The sextic case:} For $a\geq 7$, the following polynomials generate cyclic sextic totally real extensions of $\QQ$, as proved by Gras, \cite{Gras}.
	$$
		x^6 -2(a-1)x^5 -5(a+2)x^4 -20x^3 +5(a-1)x^2 + (2a + 4)x +1.
	$$
	
\end{enumerate}
	
\subsection{Polynomials generating non-abelian extensions of $\QQ$:}
\begin{enumerate}
	\item Let $F = \QQ(\sqrt{d})$ for some fixed square-free $d\in \NN$.  For any $a,b\in \ZZ$, the element $(a + b\sqrt{d})^2 + 1$ is totally positive, so $a + b\sqrt{d} + \sqrt{(a + b\sqrt{d})^2 + 1}$ is a totally real unit whose minimal polynomial is
	$$
		x^4 - 4ax^3 +(4(a^2 - b^2 d) - 2)x^2 + 4ax^2 +1
	$$
We therefore get an infinite 2-parameter family of suitable polynomials. Although the roots of such polynomials do not generally generate abelian extensions of $\QQ$, the roots lie in $F^{(2)} = F^{(2)}_{ab}$, so our theorem applies.

\item More generally, if $\theta$ is any totally real algebraic integer, then $u(\theta) = \theta + \sqrt{\theta^2+1}$ is a totally real unit which satisfies $x^2 - 2\theta x -1$. 
Let $\alpha$ be a fixed totally real algebraic integer with conjugates $\{\alpha = \alpha_1, \dots, \alpha_n\}$ and let $F$ be a number field containing $\alpha$, enlarged to be Galois without loss of generality.  Let $h(t_1, t_2)\in \ZZ[t_1, t_2]$ be a polynomial satisfying $\deg_{t_1}(h) > 0$ and $\deg_{t_2}(h) = [\QQ(\alpha) : \QQ] -1$.
  Define $\theta(a) = h(a, \alpha)$.  Then $\theta(a)$ is totally real for all $a\in \ZZ$ and we can take 
$$
	f_a(x)
		=\prod_{i = 1}^n  (x^2 - 2h(a, \alpha_i)x -1)
$$
which has $u(\theta(a))$ as a root by design.  The coefficients of $f_a(x)$ are polynomials in $a$ which depend on $\alpha$, and at least one is nonconstant because the degree restrictions on $h(t_1,t_2)$ ensures that $h(a,\alpha)$ outputs infinitely many values as $a$ varies.  Again, the roots of all the polynomials $f_a(x)$ lie in $F^{(2)} = F^{(2)}_{ab}$, so our theorem applies, although it will not generally be contained in an abelian extension of $\QQ$.
\end{enumerate}¥


\begin{thebibliography}{10}

\bibitem{bz}
{\sc Bombieri, E., and Zannier, U.}
\newblock A note on heights in certain infinite extensions of $\mathbb{Q}$.
\newblock {\em Atti della Accademia Nazionale dei Lincei. Classe di Scienze
  Fisiche, Matematiche e Naturali. Rendiconti Lincei. Matematica e Applicazioni
  12}, 1 (3 2001), 5--14.

\bibitem{castillo-thesis}
{\sc {Castillo Fern\'andez}, M.}
\newblock {\em On the Julia Robinson number of rings of totally real algebraic
  integers in some towers of Nested Square Roots}.
\newblock PhD thesis, Universidad de Concepci\'on, 2018.
\newblock URL: http://repositorio.udec.cl/handle/11594/3003.

\bibitem{cfvv}
{\sc {Castillo Fern\'andez}, M., {Vidaux}, X., and {Videla}, C.~R.}
\newblock Julia robinson numbers and arithmetical dynamic of quadratic
  polynomials.
\newblock {\em arXiv e-prints\/} (Nov 2017).

\bibitem{cw}
{\sc Checcoli, S., and Widmer, M.}
\newblock On the northcott property and other properties related to polynomial
  mappings.
\newblock {\em Mathematical Proceedings of the Cambridge Philosophical Society
  155}, 1 (07 2013), 1--12.

\bibitem{dpr}
{\sc Davis, M., Putnam, H., and Robinson, J.}
\newblock The decision problem for exponential diophantine equations.
\newblock {\em Ann. of Math. (2) 74\/} (1961), 425--436.

\bibitem{fhv}
{\sc Fried, M.~D., Haran, D., and V\"{o}lklein, H.}
\newblock Real {H}ilbertianity and the field of totally real numbers.
\newblock In {\em Arithmetic geometry ({T}empe, {AZ}, 1993)}, vol.~174 of {\em
  Contemp. Math.} Amer. Math. Soc., Providence, RI, 1994, pp.~1--34.

\bibitem{fuku}
{\sc Fukuzaki, K.}
\newblock Definability of the ring of integers in some infinite algebraic
  extensions of the rationals.
\newblock {\em Math. Log. Q. 58\/} (2012), 317--332.

\bibitem{gr}
{\sc Gillibert, P., and Ranieri, G.}
\newblock Julia robinson's numbers.
\newblock {\em arXiv e-prints\/} (Oct 2017).

\bibitem{gras-quartic}
{\sc Gras, M.-N.}
\newblock Table num\'erique du nombre de classes et des unit\'es des extensions
  cycliques r\'eelles de degr\'e 4 de $\mathbb{Q}$.
\newblock {\em Publications Math\'ematiques de Besan\c{c}on\/} (1977/78),
  1--79.

\bibitem{Gras}
{\sc Gras, M.-N.}
\newblock Special units in real cyclic sextic fields.
\newblock {\em Mathematics of Computation 48}, 177 (1987), 179--182.

\bibitem{kamke}
{\sc Kamke, E.}
\newblock Verallgemeinerungen des {W}aring-{H}ilbertschen {S}atzes.
\newblock {\em Math. Ann. 83}, 1-2 (1921), 85--112.

\bibitem{kishi}
{\sc Kishi, Y.}
\newblock A family of cyclic cubic polynomials whose roots are systems of
  fundamental units.
\newblock {\em Journal of Number Theory 102}, 1 (2003), 90 -- 106.

\bibitem{lang}
{\sc Lang, S.}
\newblock {\em Algebra}, rev. 3rd~ed., vol.~211.
\newblock Springer, New York, 2002.

\bibitem{mruv}
{\sc Mart\'{\i}nez-Ranero, C., Utreras, J., and Videla, C.~R.}
\newblock Undecidability of {$\Bbb Q^{(2)}$}.
\newblock {\em Proc. Amer. Math. Soc. 148}, 3 (2020), 961--964.

\bibitem{mat}
{\sc Matijasevi\v{c}, J.~V.}
\newblock The {D}iophantineness of enumerable sets.
\newblock {\em Dokl. Akad. Nauk SSSR 191\/} (1970), 279--282.

\bibitem{jr-Q}
{\sc Robinson, J.}
\newblock Definability and decision problems in arithmetic.
\newblock {\em J. Symbolic Logic 14\/} (1949), 98--114.

\bibitem{jr-F}
{\sc Robinson, J.}
\newblock The undecidability of algebraic rings and fields.
\newblock {\em Proc. Amer. Math. Soc. 10\/} (1959), 950--957.

\bibitem{jr}
{\sc Robinson, J.}
\newblock On the decision problem for algebraic rings.
\newblock In {\em Studies in mathematical analysis and related topics}.
  Stanford Univ. Press, Stanford, Calif, 1962, pp.~297--304.

\bibitem{rumely}
{\sc Rumely, R.~S.}
\newblock Undecidability and definability for the theory of global fields.
\newblock {\em Transactions of the American Mathematical Society 262}, 1
  (1980), 195--217.

\bibitem{sw}
{\sc Schoof, R., and Washington, L.~C.}
\newblock Quintic polynomials and real cyclotomic fields with large class
  number.
\newblock {\em Mathematics of Computation 50}, 182 (1988), 543--556.

\bibitem{shanks}
{\sc Shanks, D.}
\newblock The simplest cubic fields.
\newblock {\em Mathematics of Computation 28}, 128 (1974), 1137--1152.

\bibitem{shlap}
{\sc Shlapentokh, A.}
\newblock First-order decidability and definability of integers in infinite
  algebraic extensions of the rational numbers.
\newblock {\em Israel Journal of Mathematics 226}, 2 (2018), 579--633.

\bibitem{siegel}
{\sc Siegel, C.}
\newblock Darstellung total positiver {Z}ahlen durch {Q}uadrate.
\newblock {\em Math. Z. 11}, 3-4 (1921), 246--275.

\bibitem{vdd}
{\sc Van Den~Dries, L.}
\newblock Elimination theory for the ring of algebraic integers.
\newblock {\em Journal fur die Reine und Angewandte Mathematik 1988}, 388
  (1988), 189--205.

\bibitem{vv-nested}
{\sc Vidaux, X., and Videla, C.~R.}
\newblock Definability of the natural numbers in totally real towers of nested
  square roots.
\newblock {\em Proceedings of the American Mathematical Society 143}, 10
  (2015), 4463--4477.

\bibitem{vv-northcott}
{\sc Vidaux, X., and Videla, C.~R.}
\newblock A note on the northcott property and undecidability.
\newblock {\em Bulletin of the London Mathematical Society 48}, 1 (2015),
  58--62.

\bibitem{vid}
{\sc Videla, C.~R.}
\newblock Definability of the ring of integers in pro-p galois extensions of
  number fields.
\newblock {\em Israel Journal of Mathematics 118}, 1 (Dec 2000), 1--14.

\bibitem{vid-cycl}
{\sc Videla, C.~R.}
\newblock The undecidability of cyclotomic towers.
\newblock {\em Proc. Amer. Math. Soc. 128}, 12 (2000), 3671--3674.

\bibitem{w}
{\sc Washington, L.~C.}
\newblock {\em Introduction to cyclotomic fields}, 2nd~ed., vol.~83.
\newblock Springer, New York, 1997.

\bibitem{wid}
{\sc Widmer, M.}
\newblock On certain infinite extensions of the rationals with northcott
  property.
\newblock {\em Monatshefte f\"ur Mathematik 162}, 3 (2011), 341--353.

\bibitem{wolf}
{\sc Wolf, R.~S.}
\newblock {\em A tour through mathematical logic}, vol.~30 of {\em Carus
  Mathematical Monographs}.
\newblock Mathematical Association of America, Washington, DC, 2005.

\end{thebibliography}
\end{document}